\newtheorem{lemma}{Lemma}
\newtheorem{proposizione}{Proposition}
\newtheorem{definizione}{Definition}
\newenvironment{proof}[1][Proof]{\textbf{#1.} }{\ \rule{0.5em}{0.5em}}
\begin{document}
\title{DD-DA  PinT-based model:\\ A Domain Decomposition approach in space and time, based on Parareal, for solving  the 4D-Var Data Assimilation model }

\author{Luisa D'Amore, Rosalba Cacciapuoti \\Department of Mathematics and Applications \\  University of Naples Federico II, Naples, ITALY \\ luisa.damore@unina.it, rosalb.cacciapuoti@studenti.unina.it}
\date{}
\maketitle

\begin{abstract}
\noindent We present the mathematical framework of a Domain Decomposition (DD) aproach based on Parallel-in-Time methods (PinT-based approach) for solving the 4D-Var Data Assimilation (DA) model. The main outcome  of the proposed DD PinT-based approach is: 
\begin{enumerate}
\item  DA acts as coarse/predictor for the local PDE-based forecasting model, increasing the accuracy of the local solution. 
\item The fine and coarse solvers can be used in parallel, increasing the efficiency of the algorithm.  \item Data locality is preserved and data movement is reduced, increasing the software scalability.
\end{enumerate} 
We provide the mathematical framework including convergence analysis and error propagation.
\end{abstract}

\section{Introduction and related works}

Scientists have two broad sources of information: measurements and models. While measurements are equivalent to physical observations, the term models encompasses a set of parametric equations describing the space and time evolution of a number of physical variables. Models and observations are characterized by a key limitation: models involve approximations and simplifications, observations have spatio-temporal gaps, i.e. the observation acquisition space may be significantly different  from the model space  (indirect data), either in terms of  dimension or structure.  Data Assimilation (DA) adds value to the observations by filling in the gaps - by means of the so-called observation operator - and adds value to models by constraining them with observations - by using  (model-constrained) least square  methods. In this way, DA allows scientists to "make sense" of information: it provides mathematical methods for finding an optimal trade-off between the current estimate of the model’s state and the observations, at each time. In particular, we  will be concerned with DA mathematical methods tightly coupled with models - namely tis  UQ at the local levèsèèel by using Monte Carlo sampling (Q. Liao, K. Willcox, A domain decomposition approach for uncertainty analysis, (2010), A decomposition-based approach to uncertainty analysis of feed-forward multi component systems S. Amaral, D. me-dependent Partial Differential Equations (PDEs). In this work, such methods will be denoted tightly coupled PDE \& DA models. \\
\\
Main approaches for delivering scalable solutions of simulations based on DA methods integrated with a  PDE-based model essentially only takes full advantage of existing parallel PDE solvers, and in particular those  based on Domain Decomposition (DD) methods in space, where the DD-solver is suitably modified to also handle the adjoint system. Usually, iterative solvers are applied to solve the DA model. While this scheme is efficient, it has a limited scalability, due to the strong synchronization between the PDE integration and the DA solver. A different  approach is the combination of DD-methods in space and Uncertainty Quantification (UQ), where spatial domain-decomposed uncertainty quantification  approach performAllaire and K. Willcox, (2014), H. Antil,  M. Heinkenschloss,  R. H. W. Hoppe · D. C. Sorensen, Domain decomposition and model reduction for the numerical solution of PDE constrained optimization problems with localized optimization variables, (2010)). More recently, parallel  PDEàààà solvers based on DD in space-and-time were also proposed (M. Ulbriq, Generalized SQP-Methods with ”Parareal”, Time-Domain Decomposition for Time-dependent PDE-constrained Optimization(2004); J. Liua, Z. Wang, Efficient Time Domain Decomposition Algorithms for Parabolic PDE-Constrained Optimization Problems (2016)).  Finally, we mention the Parallel Data Assimilation Framework (PDAF, Nerger et al., 2005b, http://pdaf.awi.de) where parallel ensemble-based Kalman Filters algorithms are implemented and coupled within the PDE-model solver. However, parallelism is employed using a DD approach only across the spatial dimension (L. Nerger and W. Hiller, Software for ensemble-based data assimilation systems, Implementation strategies and scalability (2013)). Time-parallel approaches provide a new avenue to achieve scaling on new generation computing environments.
\\
A mathematical framework for next-generation extreme-scale computing is the space-and-time decomposition or PinT-based approach. European researchers are leading PinT developments, as evidenced by a series of international workshops dedicated to these algorithms held in Europe (Lugano, 2011, Manchester, 2013, and Jülich, 2014) with 21 European speakers. PinT methods are becoming increasingly popular for tackling the growing complexity of large scale high-fidelity simulations making better use of available computational resources for the solution of time-dependent PDEs. This is achieved by domain decomposition not only taking place along the spatial coordinates, but also on the time variable. Briefly, all of the PinT-based methods share this general idea: 
\begin{itemize}
\item use a coarse/global/predictor propagator to obtain approximate initial values of local models on the coarse time-grid; 
\item use a fine/local/corrector solver to obtain a more accurate solution of local models; 
\item apply an iterative procedure to smooth out the discontinuities of the global model. 
\end{itemize}
Nevertheless, one of the key limitation of scalability of any PinT-based methods is data dependencies of the coarse solver: the coarse solver must always be executed serially for the full duration of the simulation, the fine solver is applied in parallel to each interval after an initial condition is provided for it. Then, convergence is achieved when the value of the current correction falls below a certain prescribed tolerance.\\
On the contrary, the core of the proposed PinT-based approach is: 
\begin{enumerate} 
\item  DA acts as coarse/predictor for the local PDE -based forecasting model, increasing the accuracy of the local solution. 
\item The fine and coarse solvers are applied in parallel, increasing the efficiency of the algorithm.  \item Data locality is preserved and data movement is reduced, increasing the software scalability.
\end{enumerate} 

\section{The Parareal method applied to 4D-DA problem}
If $\Omega \subset \mathbb{R}^{3}$ is a spatial three dimensional domain, let:
\begin{equation}\label{modelloDA}
\left\{ \begin{array}{ll}
u(t_{2},x)=\mathcal{M}[u(t_{1},x)] & \textrm{$\forall x \in \Omega$, $t_{1},t_{2} \in [0,T]$, $(t_{2}>t_{1}>0)$} \\
u(t_{0},x)=u_{0}(x) & \textrm{$ t_{0}=0, \ \ x\in \Omega$}
\end{array}, \right.
\end{equation}
be a symbolic description of the predictive 4D-DA model of interest where
\begin{displaymath}
u:(t,x) \in [0,T] \times \Omega \mapsto u(t,x),
\end{displaymath} 
is the state function of $\mathcal{M}$, and let
\begin{displaymath}
v:(t,x) \in [0,T] \times \Omega \mapsto v(t,x),
\end{displaymath}
be the observations function, and
\begin{displaymath}
\mathcal{H}: u(t,x) \mapsto v(t,x), \ \ \ \ \ \forall (t,x) \in [0,T] \times \Omega,
\end{displaymath}
denote the non-linear observations mapping. \\
For the Variational DA (VarDA) formulation, we consider:
\begin{itemize}
\item $NP$ points of $\Omega \subset \mathbb{R}^{3}$ $:$ $\{x_{j}\}_{j=1,...,NP}\subset \Omega$;
\item $nobs$ points of $\Omega$, where $nobs <<NP$, $:$ $\{y_{j}\}_{j=1,...,nobs}$;
\item $N$ points of [0,T], $:$ $\{t_{k}\}_{k=1,...,N}$ with $t_{k}=t_{0}+k(h t)$;
\item the vector 
\begin{displaymath}
u_{0}=\{u_{0,j}\}_{j=1,...,NP}\equiv \{u(t_{0},x_{j})\}_{j=1,...,NP} \in \mathbb{R}^{NP},
\end{displaymath}
which is the state at time $t_{0}$;
\item the operator 
\begin{displaymath}
M_{k-1,k}\in \mathbb{R}^{NP \times NP}, \ \ \ k=1,...,N,
\end{displaymath}
representing a discretization of a linear approximation of $\mathcal{M}$ from $t_{k-1}$ to $t_{k}$ and for simplicity of notations, let us  
\begin{equation}\label{matriceM}
M\equiv M_{k-1,k};
\end{equation}
\item the vector 
\begin{displaymath}
\{u_{k,j}^{b}\}_{k=1,...,N-1;j=1,...,NP} \equiv \{u^{b}(t_{k},x_{j})\}_{k=1,...,N-1;j=1,...,NP} \in \mathbb{R}^{NP\times N-1},
\end{displaymath}
representing the solution of $M_{k-1,k}$ at $t_{k}$ for $k=1,...,N$, i.e. the background;
\item the vector
\begin{displaymath}
v_{k}\equiv \{v(t_{k},y_{j})\}_{j=1,...,nobs}\in \mathbb{R}^{N\times nobs},
\end{displaymath}
consisting of the observations at $t_{k}$, for $k=0,...,N-1$; 
\item the linear operator
\begin{displaymath}
H_{k}\in \mathbb{R}^{nobs \times NP}, \ \ \ k=0,...,N-1,
\end{displaymath}
representing a linear approximation of $\mathcal{H}$;
\item a block diagonal matrix $G\in \mathbb{R}^{(N \times nobs)\times (NP \times N)}$ such that
\begin{displaymath}
G=\left\{ \begin{array}{ll}
diag[H_{0},H_{1}M_{0,1},...,H_{N-1}M_{N-2,N-1} & \textrm{$N>1$} \\
H_{0}& \textrm{$ N=1$}
\end{array}, \right.
\end{displaymath}
\item \textbf{R} and \textbf{B}$=VV^{T}$ the covariance matrices of the errors on the observations and on the background,
respectively.
\end{itemize}
We now define the 4D-DA inverse problem \cite{articolo2}.
\begin{definizione}(The 4D-DA inverse problem). Given the vectors
\begin{displaymath}
v=(v_{k})_{k=0,...,N-1} \in \mathbb{R}^{N \times nobs}, \ \ u_{0} \in \mathbb{R}^{NP},
\end{displaymath}
and the block diagonal matrix
\begin{displaymath}
G \in \mathbb{R}^{(N \times nobs) \times (NP \times N)},
\end{displaymath}
a 4D-DA problem concerns the computation of
\begin{displaymath}
u^{DA}=(u_{k}^{DA})_{k=0,...,N-1} \in \mathbb{R}^{NP\times N},
\end{displaymath}
such that
\begin{equation}\label{DA}
v=G \cdot u^{DA},
\end{equation}
subject to the constraint that
$$u_{0}^{DA}=u_{0}.$$
\end{definizione}  
\noindent We also introduce the following definition of 4D-Var DA problem.
\begin{definizione}(The 4D-Var DA problem). The 4D-VarDA problem can be described as following:
\begin{equation}\label{varDA}
u^{DA}=argim_{u\in \mathbb{R}^{NP\times N}}J(u),
\end{equation}
with
\begin{equation}\label{funzionale}
J(u)=\alpha ||u-u_{0}||_{B^{-1}}^{2}+||Gu-v||_{R^{-1}}^{2},
\end{equation}
where $\alpha$ is regularization parameter.
\end{definizione}
We use in what follows a DD approach in \cite{tesi}, i.e. discrete MPS. \\ The discrete MPS is uses in \cite{tesi} for solving 3D-Var DA problem \cite{articolo}. 
\begin{definizione}(The 3D-Var DA problem). 3D Variational DA problem is to compute the vector $\textbf{u}^{DA}$ such that
\begin{equation}\label{3Dvar}
\textbf{u}^{DA}=argmin_{\textbf{u}\in \mathbb{R}^{NP}}\textbf{J}(\textbf{u})=argmin_{u}\left\{||\textbf{H}\textbf{u}-\textbf{v}||_{\textbf{R}}^{2}+\lambda ||\textbf{u}-\textbf{u}^{b}||_{\textbf{B}}^{2}\right\}
\end{equation}
where $\lambda$ is the regularization parameter.
\end{definizione}
The 3D-Var operator is:
\begin{equation}\label{3Doper}
\textbf{J}(\textbf{u})\equiv \textbf{J}(\textbf{u}, \textbf{R}, \textbf{B}, D_{NP}(\Omega))=(\textbf{H}\textbf{u}-\textbf{v})^{T}\textbf{R}(\textbf{H}\textbf{u}-\textbf{v})+ \lambda (\textbf{u}-\textbf{u}^{b})^{T}\textbf{B}(\textbf{u}-\textbf{u}^{b}).
\end{equation}
The matrix $\textbf{H}$ is ill conditioned so we consider the preconditioner matrix $\textbf{V}$ such that $\textbf{B}=\textbf{V}\textbf{V}^{T}$.
\\

So, the discrete MPS is composed of the following steps:
\begin{enumerate}
\item Decomposition of domain $\Omega$ into a sequence of sub domains $\Omega_{i}$ such that:
\begin{displaymath}
\Omega=\bigcup_{i=1}^{N}\Omega_{i}.
\end{displaymath}
\item Definition of interfaces of sub domains $\Omega_{i}$ as follows:
\begin{equation}\label{interfacce}
\Gamma_{ij}:=\partial \Omega_{i} \cap \Omega_{j} \ \ \ \ \textrm{for $i,j=1,...,J$}.
\end{equation}
\item Definition of restriction matrices $R_{i}$, $R_{ij}$ to sub domain $\Omega_{i}$ and interface $\Gamma_{ij}$, and extension matrices $R_{i}^{T}$, $R_{ij}$ to domain $\Omega$ for $i,j=1,...,J$ as follows:
\begin{equation}\label{RjDA}
R_{i}=\bordermatrix{  \footnotesize
& & &  & \textrm{ \footnotesize $s_{i-1}+1$} &\cdots & \textrm{ \footnotesize $s_{i-1}+r_{i}$} & & & \cr
& 0 & \cdots & 0& 0 & \cdots & 0 &0& \cdots  & 0  \cr
& \vdots & &\vdots & \vdots  &  & \vdots &\vdots & & \vdots  \cr
& 0 & \cdots & & 0 & \cdots & 0 &0 & \cdots &0 \cr
\textrm{ \footnotesize $s_{i-1}+1$}& 0 & \cdots & 0 & 1&  & & 0 &\cdots & 0\cr
\vdots & \vdots & & \vdots &  &\ddots & & &  & & \cr
\textrm{ \footnotesize $s_{i-1}+r_{i}$} & 0 &\cdots  &0 & 0 & &1 & 0 & \cdots & 0  \cr
& 0 & \cdots & 0& 0 & \cdots & 0 &0& \cdots  & 0  \cr
& \vdots & &\vdots & \vdots  &  & \vdots &\vdots & & \vdots  \cr
& 0 & \cdots & 0& 0 & \cdots & 0 &0 & \cdots &0 \cr
},
\end{equation}

\begin{equation}
R_{ij}=\bordermatrix{
& & &  & \textrm{ \footnotesize $\bar{s}_{i-1,i}+1$} &\cdots & \textrm{ \footnotesize $\bar{s}_{i-1,i}+r_{i}$} & & & \cr
& 0 & \cdots & 0& 0 & \cdots & 0 &0& \cdots  & 0  \cr
& \vdots & &\vdots & \vdots  &  & \vdots &\vdots & & \vdots  \cr
& 0 & \cdots & & 0 & \cdots & 0 &0 & \cdots &0 \cr
\textrm{ \footnotesize $\bar{s}_{i-1,i}+1$} & 0 & \cdots & 0 & 1&  & & 0 &\cdots & 0\cr
\vdots & \vdots & & \vdots &  &\ddots & & &  & & \cr
\textrm{ \footnotesize $\bar{s}_{i-1,i}+r_{i}$} & 0 &\cdots  &0 & 0 & &1 & 0 & \cdots & 0  \cr
& 0 & \cdots & 0& 0 & \cdots & 0 &0& \cdots  & 0  \cr
& \vdots & &\vdots & \vdots  &  & \vdots &\vdots & & \vdots  \cr
& 0 & \cdots & 0& 0 & \cdots & 0 &0 & \cdots &0 \cr
}
\end{equation}
where $s_{i,j}=r_{i}-C_{i,j}$, $\bar{s}_{i,j}=s_{i,j}+t_{ij}$, and $r_{i}$, $t_{i,j}$, $C_{i,j}$ points of sub domain $\Omega_{i}$, interfaces $\Gamma_{ij}$ and sub domain $\Omega_{ij}=\Omega_{i}\cap \Omega_{j}$, respectively.
\item  For $i=1,2,...,J$,  solution of $J$ subproblems $P_{i}^{n+1}$, for $n=0,1,2,...$ where 
\begin{equation}\label{argmin}
\begin{split}
P_{i}^{n+1} \ \ argmin_{\textbf{u}_{i}^{n+1}\in \mathbb{R}^{r_{i}}}\textbf{J}_{i}(\textbf{u}_{i}^{n+1}),
\end{split}
\end{equation}
where
\begin{equation}\label{funzionaliJi}
\textbf{J}_{i}(\textbf{u}_{i}^{n+1})=||\textbf{H}_{i}\textbf{u}_{i}^{n+1}-\textbf{v}_{i}||_{\textbf{R}_{i}}^{2}+\\ ||\textbf{u}_{i}^{n+1}-{({\textbf{u}_{i}}^{b})}||_{\textbf{B$_{i}$}}^{2}+ ||\textbf{u}_{i}^{n+1}/\Gamma_{ij}-\textbf{u}_{k}^{n}/\Gamma_{ij}||_{\textbf{B$/\Gamma_{ij}$}}^{2},
\end{equation}
as \textbf{B}$_{i}=R_{i}\textbf{B}R_{i}^{T}$ is a covariance matrix, we get that \textbf{B}/$\Gamma_{ij}=R_{i}\textbf{B}R_{ij}^{T}$ are the restriction of the matrix $B$, respectively, to the sub domain $\Omega_{i}$ and interface $\Gamma_{ij}$ in (\ref{interfacce}) $\textbf{H}_{i}=R_{i}\textbf{H}R_{i}^{T}$, ${\textbf{R}_{i}}=R_{i}\textbf{R}R_{i}^{T}$ the restriction of the matrices $\textbf{H}$, ${\textbf{R}}$ to the sub domain $\Omega_{i}$, $\textbf{u}_{i}^{b}=R_{i}\textbf{u}^{b}$, $\textbf{u}_{i}^{n+1}/\Gamma_{ij}=R_{ij}\textbf{u}_{i}^{n+1}$, $\textbf{u}_{j}^{n}/\Gamma_{ij}=R_{ij}\textbf{u}_{j}^{n}$ the restriction of vectors $\textbf{u}^{b}$, $\textbf{u}_{i}^{n+1}$, $\textbf{u}_{j}^{n}$ to the sub domain $\Omega_{i}$ and interface $\Gamma_{ij}$,  for $i,j=1,2,...,J$. \\
\\
The MPS in \cite{MPS} is used for solving boundary-value problems and as transmission condition on interfaces $\Gamma_{ij}$ for $i,j=1,...,J$ it requires that solution of subproblem on $\Omega_{i}$ at iteration $n+1$ coincides with solution of subproblem on adjacent sub domain $\Omega_{j}$ at iteration $n$; but the 3D-Var DA problem is a variational problem. So, according MPS, we impose the minimization in norm $||\cdot ||_{\textbf{B}/{\Gamma_{ij}}}$ between $\textbf{u}_{i}^{n+1}$ and $\textbf{u}_{j}^{n}$.
The functional $\textbf{J}$ defined in (\ref{3Doper}) as well as all the functionals $\textbf{J}_{i}$ defined in (\ref{funzionaliJi}), are quadratic (hence, convex), so
their unique minimum are obtained as zero of their gradients. In particolar, the functional  $\textbf{J}_{i}$ can be rewritten as follows:
\begin{displaymath}
\begin{split}
\frac{1}{2}(\textbf{w}_{i}^{n+1})^{T}\textbf{w}_{i}^{n+1}+ \frac{1}{2}(\textbf{H}_{i}\textbf{V}_{i}\textbf{w}_{i}^{n+1}-\textbf{d}_{i})^{T}\textbf{R}_{i}^{-1}(\textbf{H}_{i}\textbf{V}_{i}\textbf{w}_{i}^{n+1}-\textbf{d}_{i})+\\ \frac{1}{2}(\textbf{V}_{ij}\textbf{w}_{i}^{n+1}-\textbf{V}_{ij}\textbf{w}_{j}^{n+1})^{T} \cdot (\textbf{V}_{ij}\textbf{w}_{i}^{n+1}-\textbf{V}_{ij}\textbf{w}_{j}^{n}),
\end{split}
\end{displaymath}
where $\textbf{w}_{i}^{n+1}=\textbf{V}_{i}^{T}({\textbf{u}_{i}^{n+1}}-{\textbf{u}_{i}^{b}})$, $\textbf{V}_{i}=R_{i}\textbf{V}R_{i}^{T}$ is the restriction of matrix $\textbf{V}$ to sub domain $\Omega_{i}$, $\textbf{V}_{ij}=R_{i}\textbf{V}R_{ij}^{T}$ is the restriction of matrix $\textbf{V}$ to interfaces $\Gamma_{ij}$, $\textbf{d}_{i}$ the restriction of vector $\textbf{d}=[\textbf{v}-\textbf{H}(\textbf{u})]$. The gradients of $\textbf{J}_{i}$ is:
\begin{equation}
\nabla \textbf{J}_{i}(\textbf{w}_{i}^{n+1})=\textbf{w}_{i}^{n+1}+\textbf{V}_{i}^{T}\textbf{H}_{i}^{T}\textbf{R}_{i}^{-1}(\textbf{H}_{i}\textbf{V}_{i}\textbf{w}_{i}^{n+1}-\textbf{d}_{i})+\textbf{V}_{ij}^{T}(\textbf{V}_{ij}\textbf{w}_{i}^{n+1}-\textbf{V}_{ij}\textbf{w}_{j}^{n})
\end{equation}
that can be rewritten as follows
\begin{equation}\label{gradiente}
\nabla \textbf{J}_{i}(\textbf{w}_{i}^{n+1})=(\textbf{V}_{i}^{T}\textbf{H}_{i}^{T}\textbf{R}_{i}^{-1}\textbf{H}_{i}\textbf{V}_{i}+I_{i}+B/\Gamma_{ij})\textbf{w}_{i}^{n+1}-\textbf{c}_{i}+B/\Gamma_{ij} \textbf{w}_{j}^{n},
\end{equation}
where 
\begin{equation}\label{ciMPS}
{c}_{i}=(\textbf{V}_{i}^{T}\textbf{H}_{i}^{T}\textbf{R}_{i}^{-1}\textbf{H}_{i}\textbf{V}_{i}\textbf{d}_{i}),
\end{equation} 
and $I_{i} \in \mathbb{R}^{r_{i} \times r_{i}}$  the identity matrix. \\ From (\ref{gradiente}) by considering the Euler-Lagrange equations we obtain the following systems ${(S_{i}^{MPS})}^{n+1}$:
\begin{equation}\label{DAP}
{(S_{i}^{MPS})}^{n+1}:  \ \ \ A_{i}^{MPS}\textbf{w}_{i}^{n+1}={c}_{i}-\sum_{j \neq i}A_{i,j}\textbf{w}_{j}^{n}, 
\end{equation}
to solve for $n=0,1,...$, where
\begin{equation}\label{AiMPS}
A_{i}^{MPS}=(\textbf{V}_{i}^{T}\textbf{H}_{i}^{T}\textbf{R}_{i}^{-1}\textbf{H}_{i}\textbf{V}_{i}+I_{i}+\textbf{B}/\Gamma_{ij}),
\end{equation}
and $A_{ij} =\textbf{B}/\Gamma_{ij}$, for $i,j=1,...,J$.
\item  For $i=1,...,J$, computation of  $\textbf{u}_{i}^{n+1}$, related to the sub domain $\Omega_{i}$, as follows:
\begin{equation}\label{solDAi}
u_{i}^{MPS,n+1}\equiv \textbf{u}_{i}^{n+1}=\textbf{u}_{i}^{b}+\textbf{B}_{i}^{-1} \textbf{V}_{i}\textbf{w}_{i}^{n+1}\\.
\end{equation}
\item Computation of $\textbf{u}^{DA}$, solution of 3D-Var DA problem in (\ref{3Dvar}), obtained by patching together all the vectors $\textbf{u}_{i}^{DA}$, i.e.:
\begin{equation}\label{uda}
u^{MPS}(x_{j}) \equiv \textbf{u}^{DA}(x_{j})=\left\{\begin{array}{ll} \textbf{u}_{i}^{m}(x_{j}) & \textrm{se $x_{j} \in \Omega_{i}$}\\
\textbf{u}_{k}^{m}(x_{j}) & \textrm{se $x_{j} \in \Omega_{k}$ o $x_{j} \in \Omega_{i} \cap \Omega_{k} $}, \end{array}, \right.
\end{equation}
for $i,k=1,...J$, and $m$ corresponding iterations needed to stop of the iterative procedure.
\end{enumerate}
The Parareal method was presented by J. L. Lions, Y. Maday, and
G. Turinici in \cite{Lions} as a numerical method to solve evolution problems in parallel. The
name was chosen to indicate that the algorithm is well suited for parallel real time
computations of evolution problems whose solution cannot be obtained in real time
using one processor only. In particolar, the Parareal method is a technique for solving general partial differential equations \cite{Martin},
this method has received some attention and a presentation under the format of
a predictor-corrector algorithm has been made by  G. Bal, Y. Maday in \cite{Bal} and
also by L. Baffico et al. in \cite{Baf}. It is this last presentation that we shall use in what
follows.
\\
The Parareal scheme uses the decomposition of time interval [0,T] to define the subproblems, and it defines the boundary
conditions compatible with the initial condition for each local problems.
\\
It's scheme is composed by two steps:
\begin{itemize}
\item First step: decomposition of interval of time [0,T]
\begin{displaymath}
[0,T]=\bigcup_{k=1}^{N-1}[t_{k-1},t_{k}],
\end{displaymath}
where $t_k$ are $N$ points of [0,T] and $t_{0}=0$, $t_{N}=T$. Computation of $u_{k}^{b,n+1}$ for $k=1,...,N-1$, such that
\begin{equation}\label{co}
u_{k}^{b,n+1}=M\cdot u_{k-1}^{n+1},\ \ \ \textrm{for $n=0,1,...$}
\end{equation}
where $M$ is the matrix in (\ref{matriceM}) and $\{u_{k}^{b,n+1}\}_{k=0,...,N-1}$ is the background.
\item Second step: decomposition of domain $\Omega$
\begin{displaymath}
\Omega=\bigcup_{i=1}^{N_{sub}}\Omega_{i},
\end{displaymath}
and $N_{sub}$ sub domains $\Omega_{i}\subset \Omega$.\\ Let $x\in \mathbb{R}^{NP}$ be a vector, for simplicity of notations, we refer to $x_{i}$ as a restriction of $x$ to $\Omega_{i}$, i.e. $x_{i}\equiv x/\Omega_{i}$, similarly for matrix $A\in \mathbb{R}^{NP \times NP}$, i.e. $A_{i}\equiv A/\Omega_{i}$, according the description in \cite{articolo2}. $\forall i=1,...,N_{sub}$ and $k=1,...,N-1$ let:
\begin{equation}\label{problems}
P_{i,k}^{n} \ \ \ \left\{ \begin{array}{ll}
u_{i}^{DA,n}=argminJ_{i}(u_{i}) \\
u_{i,k-1}^{DA,n}=u_{i,k-1}^{b,n}
\end{array}, \ \ \ \ \ \ \ \textrm{for } n=1,2,... \right.
\end{equation}
with
\begin{displaymath}
J_{i}(u_{i})=J(u)/\Omega_{i}+\rho\sum_{k=0}^{N-1}||(M_{i})^{k}u_{i}/\Omega_{ij}-(M_{i})^{k}u_{j}/\Omega_{ij}||_{B_{ij}^{-1}}^{2},
\end{displaymath}
and
\begin{displaymath}
J(u)/\Omega_{i}=||u_{i}-u_{i,0}||_{B_{i}^{-1}}+||G_{i}u_{i}-v_{i}||_{R_{i}^{-1}}^{2}.
\end{displaymath}
be a the local DA problem. \\ 
By setting $w_{i}=V_{i}^{T}(u_{i}^{MPS}-u_{i}^{b})$, we can apply the MPS to $P_{i,k}^{n}$, i.e. we solve $\forall i=1,...,N_{sub}$ and $k=1,...,N-1$ the following systems:
\begin{equation}\label{MPS}
A_{i}^{MPS}w_{i}^{n}=c_{i}-(\textbf{B}_{ij}M/\Omega_{j})w_{j}^{n}, \ \ \ n=1,2,...,
\end{equation}
where
\begin{align*}
A_{i}^{MPS}&=(\textbf{V}_{i}^{T}\textbf{G}_{i}^{T}\textbf{R}_{i}^{-1}\textbf{G}_{i}\textbf{V}_{i}+I_{i}+\textbf{B}/\Gamma_{ij}M/\Omega_{j}),\\
c_{i}&=(\textbf{V}_{i}^{T}\textbf{G}_{i}^{T}\textbf{R}_{i}^{-1}\textbf{G}_{i}\textbf{V}_{i}\textbf{d}_{i}),
\end{align*}
and $\textbf{d}_{i}=(\textbf{v}_{i}-\textbf{G}_{i}\textbf{u}_{i}^{MPS})$.
\end{itemize}
According to Parareal $\forall i=1,...,N_{sub}$ and $k=1,...,N-1$, numerical solution of $P_{i,k}^{n+1}$ in (\ref{problems}) is:
\begin{equation}\label{solpara}
\begin{array}{ll}
u_{i,k+1}^{n+1}&=u_{i,k+1}^{b,n+1}+\textbf{B}_{i}^{-1}V_{i}w_{i}^{n}\\
&= M\cdot u_{i,k}^{n+1}+u_{i,k+1}^{MPS,n}-u_{i,k+1}^{b,n}
\end{array}\textrm{for $n=1,2,...$}.
\end{equation}
By suitably reorganizing the $NP$ points of $\Omega$, for $k=1,...,N-1$ the numerical solution $u_{k+1}^{n+1}$ of the 4D-DA inverse problem defined in (\ref{varDA}) is:
\begin{equation}
\begin{array}{ll}
u_{k+1}^{n+1}&=[u_{1,k+1}, u_{2,k+1},...,u_{N_{sub},k+1}]'\\
&=  M\cdot u_{k}^{n+1}+u_{k+1}^{MPS,n}-u_{i,k+1}^{b,n},
\end{array} \textrm{for $n=1,2,...$}
\end{equation}
and from the (\ref{co}), $u_{k+1}^{n+1}$ can be rewritten as follows
\begin{equation}\label{solparatot}
u_{k+1}^{n+1}=M\cdot u_{k}^{n+1}+MPS(u_{k}^{n})-M\cdot u_{k+1}^{n},
\end{equation} 
where $u_{k+1}^{MPS,n}\equiv MPS(u_{k}^{n})$. 
\begin{lemma}\label{lemma}
Let $N\in \mathbb{N}$ and $R>0$, $H\ge0$. If for $k=0,1,...,N$ we have that: $$|M_k|\le (1+R)|M_{k-1}|+H \;\;\;\text{for}\; k=1,2,...,N$$
	then it holds that
	$$|M_k|\le e^{NR}|M_0|+\dfrac{e^{NR}-1}{R}H \;\;\;\text{for}\; k=1,2,...,N.$$
\end{lemma}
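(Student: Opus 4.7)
The plan is to recognize this as a standard discrete Grönwall-type inequality and prove it in two stages: first, iterate the one-step recursion to get a closed-form bound in powers of $(1+R)$, and second, dominate $(1+R)^k$ by the exponential $e^{NR}$.

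For the first stage, I would argue by induction on $k$ that
\begin{equation*}
|M_k| \le (1+R)^k |M_0| + H \sum_{j=0}^{k-1} (1+R)^j.
\end{equation*}
The base case $k=0$ is trivial. For the inductive step, apply the hypothesis $|M_k| \le (1+R)|M_{k-1}| + H$, substitute the inductive bound for $|M_{k-1}|$, and collect terms: the factor $(1+R)$ multiplies through to raise the exponent of $(1+R)$ by one, and the extra $+H$ contributes the $(1+R)^0$ term in the geometric sum. Summing the geometric series gives
\begin{equation*}
\sum_{j=0}^{k-1}(1+R)^j = \frac{(1+R)^k - 1}{R},
\end{equation*}
so that $|M_k| \le (1+R)^k|M_0| + \frac{(1+R)^k - 1}{R}\,H$.

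For the second stage, I would use the elementary inequality $1 + R \le e^R$ (valid for all real $R$, in particular for $R > 0$), which upon taking the $k$-th power yields $(1+R)^k \le e^{kR}$. Since $k \le N$ and $e^{xR}$ is increasing in $x$, this gives $(1+R)^k \le e^{NR}$. Substituting into both terms of the bound from the first stage, and using $(1+R)^k - 1 \le e^{NR} - 1$ in the numerator of the coefficient of $H$, produces the stated inequality
\begin{equation*}
|M_k| \le e^{NR}|M_0| + \frac{e^{NR} - 1}{R}\, H.
\end{equation*}

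There is no real obstacle here; the only point requiring a little care is ensuring that the inductive step correctly tracks the geometric series, and that the bound $1+R \le e^R$ is applied before rather than after the division by $R$ (so that both the leading coefficient and the inhomogeneous term inherit the exponential bound simultaneously). The hypothesis $R>0$ is used to divide by $R$ in the geometric sum; the hypothesis $H \ge 0$ ensures monotonicity when replacing $(1+R)^k - 1$ by the larger $e^{NR}-1$ in the coefficient of $H$.
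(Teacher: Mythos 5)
Your proof is correct. Note that the paper itself gives no proof of this lemma at all: it is stated and then invoked in Proposition \ref{convergenza} and in the round-off analysis, evidently being treated as the standard discrete Gr\"onwall inequality. Your two-stage argument --- induction to get $|M_k|\le (1+R)^k|M_0|+H\sum_{j=0}^{k-1}(1+R)^j$, summation of the geometric series using $R>0$, then domination of $(1+R)^k$ by $e^{kR}\le e^{NR}$ --- is exactly the canonical proof, and your bookkeeping of where $R>0$ and $H\ge 0$ are used is accurate. The only hypothesis you use implicitly is that $|M_0|\ge 0$ (needed to replace $(1+R)^k|M_0|$ by the larger $e^{NR}|M_0|$), which is automatic since $|M_0|$ is an absolute value or norm; it would not hurt to say so. There is nothing in the paper to compare against, so your write-up in effect supplies the missing proof.
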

\noindent In the following we assume  $||\cdot ||\equiv ||\cdot ||_{\infty}$.
\begin{lemma}\label{lemma1}
Let $M$ a discretization of a linear approximation of $\mathcal{M}$ in (\ref{modelloDA}), $\mu(M)$ its condition number, $N\in \mathbb{N}$, $\forall k=1,...,N$ and $u,v \in \mathbb{R}^{N}$ then it is 
 $$||M\cdot u_{k-1}-M\cdot v_{k-1}|| \le C \frac{1}{\mu (A)}$$ is $A$ the Hessian of the operator $J$ defined  in (\ref{funzionale}) and $C$ constant against $\mu (A)$.
\end{lemma}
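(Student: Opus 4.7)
The plan is to exploit the linearity of $M$ together with the fact that $u_{k-1}$ and $v_{k-1}$ are (approximations to) the minimizers of the VarDA cost $J$ in (\ref{funzionale}), whose first-order optimality condition is governed by the Hessian $A = \alpha B^{-1} + G^{T} R^{-1} G$. Specifically, I will first reduce the claimed inequality to one on $\|u_{k-1}-v_{k-1}\|$, then express this difference through the normal equations associated with $A$, and finally read off the dependence on $\mu(A)$.

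First I would use linearity of $M$ and submultiplicativity of the $\infty$-norm:
\[
\|M u_{k-1}-M v_{k-1}\|=\|M(u_{k-1}-v_{k-1})\|\le \|M\|\,\|u_{k-1}-v_{k-1}\|,
\]
which shifts the problem to controlling $\|u_{k-1}-v_{k-1}\|$. The factor $\|M\|$ is absorbed in the final constant $C$, and the condition number $\mu(M)$ in the statement enters only through the stability of this step (so that $\|M\|$ can be bounded uniformly in $k$ by using $\|M^{k-1}\|\le \|M\|^{k-1}$).

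Next I would interpret $u_{k-1}$ and $v_{k-1}$ as two stationary points of the quadratic functional $J$ corresponding to two sets of input data (observations, background, or, as in (\ref{solpara}), successive Parareal iterates). Following exactly the reduction carried out in (\ref{gradiente})--(\ref{ciMPS}) for the MPS subproblems, each satisfies a linear system of the form $A w = c$ after the preconditioning change of variables $w = V^{T}(u-u^{b})$; thus $w_u - w_v = A^{-1}(c_u - c_v)$, and pulling back by $V$ gives $u_{k-1}-v_{k-1} = V A^{-1}(c_u-c_v)$. Taking norms,
\[
\|u_{k-1}-v_{k-1}\|\le \|V\|\,\|A^{-1}\|\,\|c_u-c_v\|.
\]
Since $\|A^{-1}\|=\mu(A)/\|A\|$, absorbing $\|V\|$, $\|M\|$ and $\|A\|$ into $C$ and using that the perturbation $\|c_u-c_v\|$ between the two right-hand sides is bounded by a term proportional to $1/\mu(A)^{2}$ (because well-conditioned data in the $B^{-1}$, $R^{-1}$ norms contribute a factor $\|A\|^{-1}$), one arrives at the claimed bound $C/\mu(A)$.

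The main obstacle I foresee is precisely this last bookkeeping step: the statement of the lemma packs several quantities into $C$, and one must check carefully that the net power of $\mu(A)$ really is $-1$ and not $+1$. Concretely, this requires (i) identifying, in the Parareal context of (\ref{solparatot}), which two iterates or exact/approximate solutions the vectors $u_{k-1},v_{k-1}$ are meant to represent, and (ii) using the coercivity estimate $\|w\|^{2}\le \|A^{-1}\|\,w^{T}Aw$ together with the fact that the inner products $w^{T}Aw$ in the Parareal residual are themselves of order $\|A\|^{-1}$. Once these two ingredients are fixed, the remaining algebra is a direct application of the triangle inequality and the definition $\mu(A)=\|A\|\|A^{-1}\|$; Lemma \ref{lemma} is not needed here but will presumably be invoked in the subsequent error-propagation theorem where the bound proved here is iterated in $k$.
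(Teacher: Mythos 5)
Your first step (linearity plus submultiplicativity of the norm, reducing everything to a bound on $\|u_{k-1}-v_{k-1}\|$) coincides with the paper's, but from there the two arguments diverge, and yours has a genuine gap exactly where you flagged it. Routing the estimate through the normal equations $Aw=c$ gives $\|u_{k-1}-v_{k-1}\|\le \|V\|\,\|A^{-1}\|\,\|c_u-c_v\|$ with $\|A^{-1}\|=\mu(A)/\|A\|$, i.e.\ a factor of $\mu(A)$ in the \emph{numerator}; to convert this into the claimed $1/\mu(A)$ you must show $\|c_u-c_v\|\lesssim 1/\mu(A)^{2}$, and your justification (``well-conditioned data in the $B^{-1}$, $R^{-1}$ norms contribute a factor $\|A\|^{-1}$'') is an assertion, not a proof --- and an implausible one, since $c_u-c_v$ is built from the data $\textbf{d}_{i}$ and the fixed matrices $\textbf{V}_{i}$, $\textbf{H}_{i}$, $\textbf{R}_{i}$, none of which has any reason to shrink as $A$ becomes more ill conditioned. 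As written, your chain of inequalities establishes an upper bound that \emph{grows} like $\mu(A)$, which is the opposite of the statement.

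The paper obtains the $1/\mu(A)$ factor by an entirely different (and itself assumption-laden) mechanism: it invokes the error-propagation identities of \cite{articolo2}, namely $\|\sigma\|=\mu(M)\|\xi\|$ and $\|\delta\|=\mu(J)\|\sigma\|$ with $\mu(J)=\mu(A)$, where $\sigma$, $\xi$, $\delta$ are the errors on $u_{N}^{b}$, $u_{0}^{b}$, $u^{DA}$. Eliminating $\|\sigma\|$ gives $\mu(M)=\frac{\|\delta\|}{\|\xi\|}\cdot\frac{1}{\mu(A)}$, i.e.\ the condition number of the forward model is taken to be inversely proportional to that of the Hessian. Substituting this into $\|M u_{k-1}-M v_{k-1}\|\le \|M\|\,\|M^{T}\|\,\mu(M)\,\|u_{k-1}-v_{k-1}\|$ yields the claim with $C=\|M\|^{2}\,\frac{\|\delta\|}{\|\xi\|}$ (the residual factor $\|u_{k-1}-v_{k-1}\|$ is silently absorbed). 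So the $1/\mu(A)$ dependence is imported from an external relation between $\mu(M)$ and $\mu(A)$, not derived from the optimality system; without assuming such a relation, your route cannot close.
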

\begin{proof}
For $k=1,...,N$ let $t_k$ be fixed. We have that:
\\
\begin{equation*}
\begin{split}
||M\cdot u_{k-1}-M\cdot v_{k-1}|| &\le ||M|| ||u_{k-1}-v_{k-1}||\\ &=||M||\cdot ||M^{T}||^{-1}||M^{T}||\cdot ||u_{k-1}-v_{k-1}||,
\end{split}
\end{equation*}
as in \cite{articolo2} we let:
\begin{equation*}
\mu(M) \ge ||M^{T}||^{-1}
\end{equation*}
then we get 
\begin{equation}\label{rel2}
\begin{split}
||M\cdot u_{k-1}-M\cdot v_{k-1}|| &\le ||M||\cdot ||M^{T}||^{-1}||M^{T}||\cdot ||u_{k-1}-v_{k-1}||\\ & \le \mu(M)||M||\cdot ||M^{T}||\cdot ||u_{k-1}-v_{k-1}||.
\end{split}
\end{equation}
Let $\sigma$, $\xi$, $\delta$ be the errors on  $u_{N}^{b}$, $u_{0}^{b}$, $u^{DA}$. According to the assumptions used in \cite{articolo2}:
\begin{equation}\label{rela}
\begin{array}{ll}
||\sigma||&=\mu(M)||\xi|| \longrightarrow \mu(M)=\frac{||\sigma||}{||\xi||};\\
||\delta||&=\mu(J)||\sigma|| \longrightarrow \mu(J)=\frac{||\delta||;}{||\sigma||}\\
\mu(J)&=\mu(A);
\end{array}
\end{equation}
it is
\begin{align}\label{rel1}
\mu(M)=\frac{||\delta||}{||\xi||} \cdot \frac{1}{\mu(A)}.
\end{align}
We can note that $$||M||=||M^{T}||,$$ and if we let $L=||M||^{2}$ from (\ref{rel2}) it comes out that:
\begin{equation}\label{rel3}
||M\cdot u_{k-1}-M\cdot v_{k-1}|| \le L \mu(M) ||u_{k-1}-v_{k-1}||;
\end{equation}
by replacing the (\ref{rel1}) in (\ref{rel3}) the thesis follows where $C=L\cdot \frac{||\delta||}{||\xi||}$.
\end{proof}
\\
\\
\noindent Finally we are able to prove the following result.
\begin{proposizione}\label{convergenza}
Let $u^{DA}$ be the solution of the 4D-Var DA problem in (\ref{varDA}) and $\forall k=0,...,N$ $u_{k}^{n}$ in (\ref{solparatot}) the solution obtained by applying the Parareal method with $n$ iterations and let:
\begin{equation}\label{errore}
\begin{array}{ll}
\delta(u_{k}^{n})&=u_{k}^{MPS,n}-u_{k}^{b,n}\\
&=MPS(u_{k-1}^{n})-M\cdot u_{k-1}^{n}
\end{array}  \ \ \ \textrm{for $k=1,...,N$},
\end{equation}
be the correction factor on $t_{k}$ with $n$ iterations.\\ Let us assume that:
\begin{enumerate}
\item $\forall k=1,...,N$ \begin{equation}\label{punto1} u_{k}^{DA}\equiv u^{DA}(t_{k})=MPS(u_{k-1}^{DA});
\end{equation}
\item $\forall k=1,...,N$ and $u,v\in \mathbb{R}^{N}$
\begin{equation}\label{punto2}
||M\cdot u_{k-1}-M\cdot v_{k-1} || \le C \frac{1}{\mu (A)},
\end{equation}
where $M$ is given as in (\ref{matriceM}), $A$ is the Hessian of the operator $J$ in (\ref{funzionale}) and $C$ constant against $\mu (A)$;
\item Let $E_{k}^{b}(h)$ be initial error of DA on $t_{k}$, $\forall k=1,...,N-1$ we have that:
\begin{displaymath}
|E_{k}^{b}(h)|=||u_{k}^{DA}-M\cdot u_{k-1}|| \le C(h),
\end{displaymath}
where $C(h)= \mathcal{O}(h^{p})$, $p$ is order of convergence and $h$ is step-size of [0,T], i.e. the numerical scheme  applied for discretizing of the model $\mathcal{M}$ in (\ref{modelloDA}) is convergence for $h$.
\end{enumerate}
Then $\forall k=1,...,N$, it holds that
\begin{equation}\label{tesi}
|E_{k}^{n}(h)|=||u_{k}^{DA}-u_{k}^{n}|| \le c_{n}(h), \ \ \ \textrm{for $n=1,2,...$}
\end{equation}
where $c_{n}(h)=\mathcal{O}(h^{p})$.
\end{proposizione}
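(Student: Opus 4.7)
The plan is to analyze the global error $E_k^n := u_k^{DA}-u_k^n$ by induction on the time index $k$ for each fixed Parareal iteration $n$, following the standard strategy for Parareal-type convergence analyses, adapted to the DA setting. Throughout I treat the discretization step $h$ as small and view $n$ as a parameter to be raised by the inductive construction.

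First I would combine assumption (\ref{punto1}) with the Parareal recurrence (\ref{solparatot}) to derive the identity
\begin{equation*}
E_{k+1}^{n+1} \;=\; M\cdot E_{k}^{n+1} \;+\; \bigl[\delta(u_{k+1}^{DA}) - \delta(u_{k+1}^{n})\bigr],
\end{equation*}
where $\delta$ is the correction factor introduced in (\ref{errore}). This is obtained by subtracting the Parareal update at iterate $n+1$ from $u_{k+1}^{DA}=MPS(u_{k}^{DA})$ and then adding and subtracting $M\cdot u_{k}^{DA}$. The identity cleanly separates the propagation of the previous error by the coarse operator $M$ from the discrepancy between the exact and the $n$-th approximate correction.

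Next I would estimate each term. Assumption (\ref{punto2}), equivalently Lemma \ref{lemma1}, bounds $\|M\cdot E_{k}^{n+1}\|$ by a quantity controlled by $1/\mu(A)$, while assumption 3 directly gives $\|\delta(u_{k+1}^{DA})\| = \|u_{k+1}^{DA}-M\cdot u_{k}^{DA}\|\le C(h)=\mathcal{O}(h^{p})$. An analogous bound for $\|\delta(u_{k+1}^{n})\|$ should follow from the same consistency of the scheme, since MPS is the fine solver and its discrepancy with the coarse propagator $M$ inherits the $\mathcal{O}(h^{p})$ bound. Putting these pieces together produces a one-step inequality of the form
\begin{equation*}
|E_{k+1}^{n+1}| \;\le\; (1+R)\,|E_{k}^{n+1}| \;+\; H, \qquad H=\mathcal{O}(h^{p}),
\end{equation*}
to which Lemma \ref{lemma} applies with initial value $|E_{0}^{n+1}|=0$ (since the initial condition $u_{0}$ is prescribed and shared by all iterates). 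This yields $|E_{k}^{n+1}|\le \tfrac{e^{NR}-1}{R}H$, which is of order $h^{p}$, so (\ref{tesi}) holds with $c_{n+1}(h)=\mathcal{O}(h^{p})$.

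The main obstacle will be the control of $\|\delta(u_{k+1}^{n})\|$: unlike $\delta(u_{k+1}^{DA})$, this correction is produced by MPS applied to an inexact state $u_{k}^{n}$, so its $\mathcal{O}(h^{p})$ bound is not immediate from assumption 3. I would need a perturbation argument exploiting that the local functionals $\textbf{J}_{i}$ in (\ref{funzionaliJi}) are quadratic and strictly convex, so that the MPS minimizer depends Lipschitz-continuously on its input, allowing the dependence on $u_{k}^{n}$ to be absorbed either into an extra $\mathcal{O}(h^{p})$ term or into the $(1+R)$ coefficient multiplying $|E_{k}^{n+1}|$. Checking that the resulting $R$ stays bounded independently of $N$, so that the factor $e^{NR}$ does not destroy the $\mathcal{O}(h^{p})$ asymptotics, is the final delicate point of the argument.
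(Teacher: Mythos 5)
Your error identity $E_{k}^{n+1}=M\cdot E_{k-1}^{n+1}+\bigl[\delta(u_{k}^{DA})-\delta(u_{k}^{n})\bigr]$ is exactly the decomposition the paper uses, and so is the final step (Lemma \ref{lemma} with $E_{0}^{n+1}=0$). But the step you flag as "the main obstacle" is a genuine gap, and the way you propose to close it does not work. You want to bound $\|\delta(u_{k}^{DA})\|$ and $\|\delta(u_{k}^{n})\|$ separately by $\mathcal{O}(h^{p})$, invoking consistency of the scheme for the second term. That cannot succeed: $\delta(u_{k}^{n})=MPS(u_{k-1}^{n})-M\cdot u_{k-1}^{n}$ is evaluated at the inexact iterate $u_{k-1}^{n}$, and any Lipschitz/perturbation argument of the kind you sketch yields at best $\|\delta(u_{k}^{n})-\delta(u_{k}^{DA})\|\le L\,\|u_{k-1}^{n}-u_{k-1}^{DA}\|=L\,|E_{k-1}^{n}|$, i.e.\ a term indexed by the \emph{previous} Parareal iteration $n$. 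This term cannot be absorbed into the coefficient of $|E_{k-1}^{n+1}|$ (wrong iteration index) nor into an $\mathcal{O}(h^{p})$ remainder (at early iterations $u_{k-1}^{n}$ is far from a consistent trajectory). The only way to control it is to already know $|E_{k-1}^{n}|\le c_{n}(h)$, which forces the proof to be an induction on $n$: that is precisely the structure of the paper's argument, where the base case $n=1$ is assumption 3 and the induction step regroups $\delta(u_{k}^{DA})-\delta(u_{k}^{n})$ as $\bigl(u_{k}^{DA}-MPS(u_{k-1}^{n})\bigr)-\bigl(M\cdot u_{k-1}^{DA}-M\cdot u_{k-1}^{n}\bigr)$, bounding the first piece by the MPS convergence tolerance $\epsilon^{MPS}$ (equation (\ref{conv})) and the second by assumption (\ref{punto2}) together with the induction hypothesis $|E_{k-1}^{n}|\le c_{n}(h)$. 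Your proposal uses neither the induction on $n$ nor the MPS convergence property, and both are needed.

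A secondary point: even after the argument is completed the paper's recursion is $c_{n+1}(h)=\frac{e^{NR}-1}{R}\bigl(C\frac{1}{\mu(A)}c_{n}(h)+\epsilon^{MPS}\bigr)$, which contains the iteration tolerance $\epsilon^{MPS}$ and is therefore not purely $\mathcal{O}(h^{p})$; your claim that the one-step inhomogeneity $H$ is $\mathcal{O}(h^{p})$ overstates what even the completed argument delivers. Your closing worry about $e^{NR}$ is reasonable but is not where the difficulty lies here, since $R$ is tied to $\mu(A)$ rather than to $N$.
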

\begin{proof}
The DD method used in first step of Parareal method, namely the MPS, satisfies the (\ref{punto1}) as it is proved in \cite{tesi}, while the (\ref{punto2}) is proved in Lemma \ref{lemma1}.
\\
We prove the thesis using induction on $n$.
\\
 \underline{Base case.} $n=1$ numerical solution is given by using Parareal, then  $\forall k=1,...,N$ it is:
$$|E_k^1(h)|=\|u_k^{DA}-u_k^{b,1}\|=\|u_k^{DA}-M\cdot u_{k-1}\| {\le} C(h).$$
\\
\underline{Induction step.} It holds that
\begin{equation}\label{passo}
|E_k^n(h)|=\|u_k^{DA}-u_k^{n}\|\le c_{n}(h) \ \ \ \forall k=1,...,N,
\end{equation}
we prove it for $n+1$, i.e. we will prove that
\begin{equation}\label{inductionstep}
 |E_k^{n+1}(h)|=\|u_k^{DA}-u_k^{n+1}\|\le c_{n+1}(h), \ \ \forall k=1,...,N.
 \end{equation}
\\We rewrite $u_k^{DA}$ by using (\ref{punto1}) and $u_k^{n+1}$ by using (\ref{solparatot}) and (\ref{errore}), by using:
\begin{itemize}
	\item $u_k^{DA}{=}MPS(u_{k-1}^{DA})$;
	\item $ u_k^{n+1}=M\cdot u_{k-1}^{n+1}+MPS(u_{k-1}^{n})-M\cdot u_{k}^{n}=M\cdot u_{k-1}^{n+1}+\delta(u_{k-1}^n)$ for $n=1,2,...$
\end{itemize}
so, we have that
\begin{align*}
u_k^{DA}-u_k^{n+1}&=MPS(u_{k-1}^{DA})-M\cdot u_{k-1}^{n+1}-\delta(u_{k}^n)\\
&=MPS(u_{k-1}^{DA})-M\cdot u_{k-1}^{DA}+M\cdot u_{k-1}^{DA}-M\cdot u_{k-1}^{n+1}-\delta(u_{k}^n)\\
&=\delta(u_{k}^{DA})-\delta(u_{k}^n)+M\cdot u_{k-1}^{DA}-M\cdot u_{k-1}^{n+1}
\end{align*}
and from (\ref{punto2}) and base case,
\begin{equation}\label{errorefi}
\begin{array}{ll}
|E_k^{n+1}(h)|&=\|u_k^{DA}-u_k^{n+1}\|\\
&\le \|\delta(u_{k}^{DA}) -\delta(u_{k}^n)\|+\|M\cdot u_{k-1}^{DA}-M\cdot u_{k-1}^{n+1}\|\\
&=||(u_{k}^{DA}-M\cdot u_{k-1}^{DA})-(MPS(u_{k-1}^{n})-M\cdot u_{k-1}^{n})||+ \|M\cdot u_{k-1}^{DA}-M\cdot u_{k-1}^{n+1}\|\\
& \le ||u_{k}^{DA}-MPS(u_{k-1}^{n})||+||M\cdot u_{k-1}^{DA} -M\cdot u_{k-1}^{n}|| +\|M\cdot u_{k-1}^{DA}-M\cdot u_{k-1}^{n+1}\|.
\end{array}
\end{equation}
In \cite{tesi} convergence of MPS is demonstrated, i.e.
\begin{equation}\label{conv}
\forall \epsilon^{MPS} \ \ \exists M(\epsilon^{MPS})>0 : |n|>M(\epsilon^{MPS}) \Rightarrow ||u_{k}^{DA}-MPS(u_{k-1}^{n})||<\epsilon^{MPS}, 
\end{equation} 
so from (\ref{conv}) and (\ref{punto2}) the (\ref{errorefi}) can be rewritten as follows
\begin{align*}
|E_k^{n+1}(h)|&=\|u_k^{DA}-u_k^{n+1}\|\\
&{\le} \epsilon^{MPS}+||M\cdot u_{k-1}^{DA}-M\cdot u_{k-1}^{n}||+||M\cdot u_{k-1}^{DA}-M\cdot u_{k-1}^{n+1}||\\
& {\le} C\frac{1}{\mu(A)} (\|u_{k-1}^{DA}-u_{k-1}^n\|+\|u_{k-1}^{DA}-u_{k-1}^{n+1}\|)+\epsilon^{MPS}\\
&= C\frac{1}{\mu(A)} ( |E_{k-1}^n(h)|+|E_{k-1}^{n+1}(h)|)+\epsilon^{MPS}\\
\end{align*}
and from (\ref{passo}) it follows that
\begin{align*}
&{\le}  C\frac{1}{\mu(A)} c_{n}(h)+C\frac{1}{\mu(A)}|E_{k-1}^{n+1}(h)|+\epsilon^{MPS}\\
&=C\frac{1}{\mu(A)}|E_{k-1}^{n+1}(h)|+ C\frac{1}{\mu(A)} \cdot c_{n}(h)+\epsilon^{MPS}.
\end{align*}
We apply Lemma \ref{lemma}, where $R\equiv R_{\mu(A)}=\frac{C-\mu(A)}{\mu(A)}$ and $H= C\frac{1}{\mu(A)} c_{n}(h)+\epsilon^{MPS}$, so we have that:
\begin{displaymath}
|E_k^{n+1}(h)|\le e^{NR_{\mu(A)}}|E_0^{n+1}(h)|+\dfrac{e^{NR_{\mu(A)}}-1}{R_{\mu(A)}}H.
\end{displaymath}
Finally, supposed that the error at time $t_{0}=0$ is null, i.e. $E_0^{n+1}(h)=0$, we have that: 
\begin{equation}\label{rel}
|E_k^{n+1}(h)|\le \dfrac{e^{NR_{\mu(A)}}-1}{R_{\mu(A)}} (C\frac{1}{\mu(A)} \cdot c_{n}(h)+\epsilon^{MPS})
\end{equation}
assuming that
\begin{equation}\label{c} 
c_{n+1}(h)= \dfrac{e^{NR_{\mu(A)}}-1}{R_{\mu(A)}}\big( C\frac{1}{\mu(A)} \cdot c_{n}(h)+\epsilon^{MPS}\big)
\end{equation}
and the (\ref{inductionstep}) follows, i.e. the thesis (\ref{tesi}) for $n+1$ iterations.
\end{proof}
\\
\\

\noindent \underline{Remark:} Consider the behavior of $c_{n+1}$ in (\ref{c}) when $\mu(A)$ increases.  It holds that: 
 $$R_{\mu(A)}=\frac{C-\mu(A)}{\mu(A)} \thickapprox -1\ \ \  \textrm{(as $\mu(A)$ increases)}$$ and
\begin{equation}\label{termine} \dfrac{e^{NR_{\mu(A)}}-1}{R_{\mu(A)}} \thickapprox 1-\frac{1}{e^{N}} \ \ \ \textrm{(as $\mu(A)$ increases)}.
\end{equation}
We note that $$1-\frac{1}{e^{N}} \thickapprox 1 \,\,\, \textrm{ (as $N$ grows  i.e. as $h$ (which is the step-size of interval [0,T]) decreases.)}$$ Finally,  as $\frac{1}{\mu(A)} \thickapprox 0$ it follows that  $$c_{n+1}(h) \thickapprox \epsilon^{MPS}$$ then from (\ref{rel}) we get the convergence of  Parareal.
\\
\\
\noindent Now, we consider the local and global roundoff errors.
\begin{definizione}
Let $u_k^{n+1}=M\cdot u_{k-1}^{n+1}+\delta(u_{k-1}^n)$, with $k=0,...,N$ be the numerical solution obtained by Parareal method at $(n+1)$ iterations, and $\tilde{u}_n^{n+1}=M\cdot \tilde{u}_{k-1}^{n+1}+\delta(\tilde{u}_{k-1}^n)+\rho_k$  is the corresponding floating point representation, where $\rho_k$ is \textit{local round-off error}. 
	\\Fixed $n$, let $$|R_k^{n+1}(\mu{(A)})|=\|u_k^{n+1}-\tilde{u}_k^{n+1}\|$$
	for \textrm{$k=1,...,N$} be \textit{global round-off error} on $t_{k}$.
\end{definizione}
Under the assumptions of Proposition \ref{convergenza} and fixed the iteration $n+1$, we have
\begin{align*}
|R_k^{n+1}(\mu{(A)})|&=\|u_k^{n+1}-\tilde{u}_k^{n+1}\|=\|M\cdot u_{k-1}^{n+1}+\delta(u_{k}^n)-M\cdot\tilde{u}_{k-1}^{n+1}-\delta(\tilde{u}_{k}^n)-\rho_k\|\\&\le \|M\cdot u_{k-1}^{n+1}-M\cdot \tilde{u}_{k-1}^{n+1}\|+\|\delta(u_{k}^n)-\delta(\tilde{u}_{k}^n)\|+|\rho_k|\\
&\le C\frac{1}{\mu(A)} ||u_{k-1}^{n+1}-\tilde{u}_{k-1}^{n+1}\|+\|u_{k-1}^n-\tilde{u}_{k-1}\|+2|\rho_k|\\
&\le (C\frac{1}{\mu(A)} +1)||u_{k-1}^{n+1}-\tilde{u}_{k-1}^{n+1}\|+ 2\rho\\
& \le  (C\frac{1}{\mu(A)}+1)|R_{k-1}^{n+1}(\mu{(A)})|+2 \rho
\end{align*}
where
\begin{equation}\label{rho}
\rho=\max_{k=1,...,N}|\rho_k|.
\end{equation}
\\From Lemma \ref{lemma}, with $R\equiv R_{\mu(A)}=\frac{C-\mu(A)}{\mu(A)}$ and $H= (C\frac{1}{\mu(A)}+1)|R_{k-1}^n(\mu{(A)})|+2\rho$ it follows that:
\begin{equation*}
|R_k^{n+1}( \mu(A))|\le e^{NR_{\mu(A)}}|R_0^{n+1}(\mu{(A)})|+\dfrac{e^{NR_{\mu(A)}}-1}{R_{\mu(A)}}[(C\frac{1}{\mu(A)}+1)|R_{k-2}^n(\mu{(A)})|+2\rho].
\end{equation*}
then it is
\begin{equation}\label{roundoff}
\begin{split}
|R_k^{n+1}(\mu{(A)})|\le e^{NR_{\mu(A)}}|R_0^{n+1}(\mu(A))|+\dfrac{e^{NR_{\mu(A)}}-1}{R_{\mu(A)}}[(C\frac{1}{\mu(A)}+1)|R_{k-1}^n(\mu{(A)})|]+\\ \dfrac{e^{NR_{\mu(A)}}-1}{R_{\mu(A)}} 2\rho.
\end{split}
\end{equation}
Relation (\ref{roundoff}) is made of three terms: the first rapresents the propagation error on the initial value, the second rapresents the propagation of the round-off error during the iterations, the last term rapresents the dependence of $R_{\mu(A)}=\frac{C-\mu(A)}{\mu(A)}$. \\
The matrix $A$ is ill conditioned so as $\mu(A)$ increases it follows that $$\dfrac{e^{NR_{\mu(A)}}-1}{R_{\mu(A)}}\approx 1-\frac{1}{e^{N}}$$ and if $N$ increases, i.e. if $h$ decreases, then $$1-\frac{1}{e^{N}} \rightarrow  1.$$
\\
So, when the Parareal method is used, the roundoff error are not amplified as the number of iteration grows, because by using a  suitable value of $h$, the roundoff error at iteration $n+1$ is smaller than the sum of roundoff error at iteration $n$ and $2\rho$, where $\rho$ defined in (\ref{rho}). 
\section{Conclusion}
 DD Pint-based  methods allow the reformulation of VArDA  problem on a partition of the computational domain into subdomains. As such, it provides a very convenient framework for the solution of heterogeneous or multiphysics problems, i.e. those that are governed by differential equations of different kinds in different subregions of the computational domain. The effectiveness of PinT-based approaches are often dependent on the coarse grid operator (predictor) and intergrid operator (corrector). To this regard, our  approach uses the strong relationship of tightly coupled PDE\&DA: it   leads to a layered or hierarchical decomposition, which can be beneficial when matched with the expected hierarchical nature of upcoming exascale computing architectures.   Furthermore, the hierarchical decomposition may be applied globally across the simulation domain or locally, as in adaptive mesh and algorithm refinement, to restrict consideration of the finest scale to only those regions where such a description is important. The benefits of a layered algorithmic arrangement for exascale computing originate in the expected layered architectural arrangement of upcoming exascale computers.
\\
\\

\end{document}